\title{Local Solubility of Hyperelliptic Curves}
\author{Omri Faraggi}
\address{Department of Mathematics, University College London, London WC1H 0AY, UK}
\email{omri.faraggi.17@ucl.ac.uk}
\date{February 2021}
\begin{document}

	\begin{abstract}
		We give a condition for a hyperelliptic curve $C$ over a local field $K$ to be locally soluble, on the condition that $C$ obtains semistable reduction after a tame extension of $K$, and that the residue field $k$ is sufficiently large relative to the genus of the curve. The condition is presented in terms of the cluster picture of $C$, a combinatorial object which determines much of the local arithmetic of $C$.
	\end{abstract}

	\maketitle
	\renewcommand{\contentsname}{Table of contents}
	\thispagestyle{empty}\setcounter{tocdepth}{1}
	\tableofcontents

\section{Introduction}

Let $C:y^2=f(x)$ be a hyperelliptic curve of genus $g \geq 2$ over a local field $K$ with residue $k$ of characteristic $p>2$. In this note we give a condition for $C$ to have a $K$-rational point in terms of the \textit{cluster picture} of $C$, given that $C$ has tame reduction --- in other words, it obtains semistable reduction after a tame extension of $K$. The cluster picture $\Sigma$ of $C$ is a combinatorial object which contains the data of the $p$-adic distances between the roots of the defining polynomial $f$. This removes dependency on the exact nature of $f$, and cluster pictures have been used used to calculate the Galois representation, semistable model, conductor and minimal discriminant of $C$ in \cite{DDMM18}, the minimal snc model if $C$ has tame reduction in \cite{faraggi20models}, the Tamagawa number in \cite{betts2018computation}, the root number in \cite{bisatt2019clusters} and finally differentials in \cite{kunzweiler2019differential} and \cite{muselli2020models}. A survey article is available at \cite{best2020users}. Since our condition is in terms of the cluster picture of $C$, it allows us to determine whether families of hyperelliptic curves are locally soluble, rather than having to consider individual curves. With some additional work, this should allow us to determine what proportion of hyperelliptic curves of a given genus are locally soluble. This follows on from work such as \cite{bhargava2020proportion} and \cite{cremona2020local}.

Throughout the note the hyperelliptic curve $C$ will always be defined by an equation $y^2 = f(x)$ and $\Rcal$ will denote the set of roots of $f$. In particular, it cannot be a degree $2$ cover of a $\PP^1$ with no point. The absolute Galois group $\GK = \Gal(\overline{K}/K)$ acts on the roots of $\Rcal$ and hence there is an induced action on the cluster picture of $C$. %add Rcal definition, add local field assumption, justify hasse weil bound, GK

\section{The Condition}

Here we state the main result of this note. A knowledge of cluster pictures is required, and so a brief exposition for the uninitiated is available in Appendix \ref{app:clusters}; for more details the reader is encouraged to turn to \cite{DDMM18} or \cite{best2020users}.

\begin{lemma}
	\label{lem:hassweil}
	Let $C:y^2 = f(x)$ be a hyperelliptic curve with tame reduction over a local field $K$ and let $\mathcal{C}$ be a regular snc model of $C$. Suppose that the residue field of $K$ has size $q > 2(g(C)^2 - 1)$. Then any smooth component $\Gamma$ of the special fibre which is fixed by Frobenius has a smooth $k$-rational point.
\end{lemma}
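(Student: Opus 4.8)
The plan is to reduce the statement to a point count on $\Gamma$ and then feed it into the Hasse--Weil bound. Since $\Gamma$ is fixed by Frobenius as a subscheme of the special fibre $\mathcal{C}_k$, it is Galois-stable and hence defined over $k$; being a single geometric component it is geometrically integral, and as a component of a regular snc model it is a smooth projective (geometrically integral) curve over $k$ of some genus $g_\Gamma$. Because the arithmetic genus of the special fibre equals $g(C)$, each component satisfies $g_\Gamma \le g(C)$. I would next pin down what a \emph{smooth $k$-rational point} means here: it is a $k$-point of $\mathcal{C}_k$ at which the special fibre is smooth, i.e.\ a $k$-point lying on $\Gamma$ alone (with $\Gamma$ of multiplicity one) and away from the finitely many crossing points where $\Gamma$ meets the other components of the snc model. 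Thus it suffices to show that $\#\Gamma(k)$ strictly exceeds the number of $k$-rational crossing points on $\Gamma$, and for safety I would bound the latter by the total number $m_\Gamma$ of crossing points on $\Gamma$.

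First I would apply the Hasse--Weil estimate to the smooth projective curve $\Gamma/k$, giving
\[
\#\Gamma(k) \ge q + 1 - 2 g_\Gamma \sqrt{q} \ge q + 1 - 2 g(C)\sqrt{q}.
\]
Next I would bound $m_\Gamma$ using the combinatorics of the model. In the snc picture the crossing points on $\Gamma$ are exactly the edges incident to $\Gamma$ in the dual graph, and for a hyperelliptic curve with tame reduction these are controlled by the cluster picture: $\Gamma$ is (up to $\mathbb{P}^1$-chains) the double cover attached to a cluster, its genus is governed by the number of odd children, and its incident components by the number of children, both ultimately bounded in terms of $g(C)$ via the $2g(C)+2$ roots. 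With such a bound $m_\Gamma \le M(g(C))$ in hand, the goal becomes the purely numerical inequality
\[
q + 1 - 2 g_\Gamma \sqrt{q} - m_\Gamma \ge 1,
\]
and the hypothesis $q > 2(g(C)^2 - 1)$ is exactly the threshold one wants to verify clears it.

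The main obstacle is this final balancing act rather than any single ingredient. The difficulty is that the Hasse--Weil deficit $2 g_\Gamma \sqrt{q}$ and the crossing-point count $m_\Gamma$ pull in opposite directions along the family of possible components: components of large genus have few incident crossings, while rational or low-genus components can meet many others, so both regimes (and the intermediate ones) must be checked against $q > 2(g(C)^2-1)$, with the worst case determining whether the stated bound is sharp enough. I would therefore organise the endgame as an optimisation over the admissible pairs $(g_\Gamma, m_\Gamma)$ coming from the cluster picture, showing the minimum of $q + 1 - 2g_\Gamma\sqrt q - m_\Gamma$ over this range is positive once $q > 2(g(C)^2-1)$. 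Finally I would record that the resulting smooth $k$-point lies in the smooth locus of the regular model, which is what later lets one Hensel-lift it to a $K$-point.
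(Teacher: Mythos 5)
Your proposal follows essentially the same route as the paper's proof: show $\Gamma$ is defined over $k$ because it is Frobenius-stable, apply the Hasse--Weil bound with $g(\Gamma)\le g(C)$, subtract the intersection points with other components to land in the smooth locus, and control both the genus and the intersection count through the cluster-picture combinatorics of the $2g(C)+2$ roots. The only difference is one of explicitness: the paper pins down the crossing count and genus via \cite[Theorem~8.5]{DDMM18} (namely $I = s_o + 2s_e$ and $2g(\Gamma)+1 = s_o + s_s$, against $\deg f \ge s_s + 2s_e + 3s_o$) and the analogous statements of \cite{faraggi20models} in the tame case, where you leave the bound $m_\Gamma \le M(g(C))$ and the final optimisation over pairs $(g_\Gamma, m_\Gamma)$ to be carried out.
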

\begin{proof}
	Assume $f$ has odd degree, since the even case is dealt with similarly. Note that any smooth component fixed by Frobenius must be defined over $k$. The Hasse-Weil bound then states that $\left|\# \Gamma(k) - (q+1)\right| \leq 2g(\Gamma)\sqrt{q}$, and hence $\Gamma(k)$ is non-empty if $q > 2(g(\Gamma)^2 - 1)$. Since $g(\Gamma) \leq g(C)$, we are done if $\mathcal{C}$ is a smooth model.
	
	Now assume $\mathcal{C}$ is a semistable model. If $\Gamma$ is genus $0$ then it has $k$-points, so assume it has positive genus and comes from a principal cluster $\s$. Let $I$ be the number of intersection points of other components. Then $\Gamma$ has a smooth $k$-rational point if $q - I > 2(g(\Gamma)^2 - 1)$, and hence it has a smooth $k$-point if $2g(C)^2 - I > 2g(\Gamma)^2$. Suppose $\s = \Rcal$ is odd, and that $\s$ has $s_s$ children of size $1$, $s_o$ proper odd children and $s_e$ proper even children. Then by \cite[Theorem~8.5]{DDMM18}, $I = s_o + 2s_e$ and $2g(\Gamma) + 1 = s_o + s_s$. We also know that $2g(C) + 1 =\deg(f) \geq s_s + 2s_e + 3s_o$. Putting these together the result follows, and similarly if $\s$ is even or $\s \neq \Rcal$
	
	Now assume $C$ has tame reduction. A similar argument works using \cite[Theorem~7.12,7.18]{faraggi20models}, noting that since $\Gamma$ is a smooth component it has $e_{\s} = 1$ and its only intersection points are loops or linking chains to other principal components.
\end{proof}

\begin{prop}
	\label{prop:ratpoints}
	Let $X$ be a curve over a local field $K$ and let $\X$ be a regular snc model of $X$. Then $X$ has a $K$-rational point if and only if $\X_k$ has a component of multiplicity $1$ which is fixed by Frobenius and has a smooth $k$-rational point.
\end{prop}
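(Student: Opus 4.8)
The plan is to prove both implications by relating $K$-rational points of $X$ to sections of the structure morphism $\X \to \operatorname{Spec}\mathcal{O}_K$, where $\mathcal{O}_K$ is the ring of integers of $K$. The forward direction will rest on the valuative criterion of properness together with an intersection-theoretic computation on the regular arithmetic surface $\X$; the reverse direction will rest on the fact that a smooth $k$-point lying on a reduced fibre component is a point where the structure morphism is smooth, so that Hensel's lemma produces an integral point. Throughout, the hypothesis that $\X$ is regular is what makes both the intersection theory and the smoothness criterion available, and the flatness of the model over $\mathcal{O}_K$ will be used freely.

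For the reverse implication, suppose $\X_k$ has a multiplicity-$1$ component $\Gamma$, fixed by Frobenius, carrying a smooth $k$-rational point $P$. First I would argue that $P$ is a point at which $\X \to \operatorname{Spec}\mathcal{O}_K$ is smooth: since the model is flat over $\mathcal{O}_K$, smoothness at $P$ is equivalent to smoothness of the fibre $\X_k$ at $P$, and because $P$ lies on a single reduced (multiplicity-$1$) component that is regular there, the fibre is regular, hence smooth, at the $k$-rational point $P$. The point $P$ then defines a section over $k$ of a smooth morphism, and since $\mathcal{O}_K$ is complete and therefore henselian, the infinitesimal lifting criterion (Hensel's lemma) lifts $P$ to a section $\operatorname{Spec}\mathcal{O}_K \to \X$. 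Restricting to the generic fibre yields the desired point in $X(K)$.

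For the forward implication, suppose $x \in X(K)$. Since $\X$ is proper over $\mathcal{O}_K$, the valuative criterion extends $x$ uniquely to a section $s : \operatorname{Spec}\mathcal{O}_K \to \X$, whose image is a horizontal divisor $D \cong \operatorname{Spec}\mathcal{O}_K$ meeting $\X_k$ at the closed point $P = s(\operatorname{Spec} k)$, which is $k$-rational. The key computation is the local intersection number of $D$ with the principal divisor $\X_k = \operatorname{div}(\pi) = \sum_i m_i \Gamma_i$ at $P$. On one hand, because $s$ is a section, the pullback $s^*\pi$ equals the uniformizer $\pi$ of $\mathcal{O}_K$, so $(D \cdot \X_k)_P = \operatorname{length}_{\mathcal{O}_K}(\mathcal{O}_K/\pi) = 1$. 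On the other hand, $(D \cdot \X_k)_P = \sum_i m_i (D \cdot \Gamma_i)_P$, a sum of non-negative integers. For this sum to equal $1$, exactly one component $\Gamma$ can pass through $P$, with multiplicity $m = 1$ and $(D \cdot \Gamma)_P = 1$; a transverse intersection number $1$ on the regular surface $\X$ forces $\Gamma$ to be regular at $P$, so $P$ is a smooth $k$-rational point of $\X_k$ lying on a multiplicity-$1$ component. Finally, since $P$ is $k$-rational it is fixed by Frobenius, and as $\Gamma$ is the unique component through $P$, Frobenius must carry $\Gamma$ to itself; thus $\Gamma$ is Frobenius-fixed and $P$ witnesses the stated condition.

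The main obstacle I anticipate is the local intersection-multiplicity argument in the forward direction: it is what simultaneously forces the multiplicity to be $1$, pins the reduction onto a single component, and yields regularity of that component at $P$. Getting this right requires care about the precise meaning of a smooth point of $\X_k$ — a regular point lying on a single reduced component — and about the identification $s^*\pi = \pi$, both of which lean on the regularity of $\X$ and the flatness of the model. The reverse direction is comparatively routine, being a standard application of Hensel's lemma once relative smoothness at $P$ has been established.
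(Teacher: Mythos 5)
Your proof is correct and follows the same underlying strategy as the paper, namely reducing the question to whether the smooth locus of $\X_k$ contains a $k$-rational point; the only difference is that where the paper simply cites \cite[Corollary~9.1.32]{liu2006algebraic} for the reduction map $X(K)\to\X_k^{\mathrm{sm}}(k)$ and its surjectivity over a henselian base, you supply a self-contained proof of both halves (the intersection computation $(D\cdot\X_k)_P=1$ for the forward direction, Hensel's lemma for the reverse). Your observation that a smooth $k$-point automatically forces its unique component to be Frobenius-fixed is also correct and matches the paper's implicit use of that fact.
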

\begin{proof}
	By \cite[Corollary~9.1.32]{liu2006algebraic} there is a reduction map $\mathrm{red}: X(K) \rightarrow \X_k(k)$ landing in the smooth locus of $\X_k$, which is onto since $K$ is henselian. Therefore $X(K)$ is empty if and only if the smooth locus of $\X_k(k)$ is empty. Since the smooth locus of $\X_k(k)$ consists of points lying on components of multiplicity $1$ which are fixed by Frobenius, the result follows.
\end{proof}

\begin{thm}
	\label{thm:multonefrob}
	Let $C$ be a hyperelliptic curve with tame reduction over a local field $K$ and let $\Ccal$ be the minimal snc model of $C$. If $\Rcal$ is principal, then $\Ccal_k$ has a component of multiplicity $1$ fixed by Frobenius precisely if at least one of the follow occurs:
	\begin{enumerate}
		\item there is a principal cluster $\s$ fixed by $\GK$ with $e_{\s} = 1$, and if in addition $\s$ is \"ubereven, the character $\epsilon_{\s}$ is trivial on $\GK$;
		\item there is a principal cluster $\s$ fixed by $\GK$ (and $\epsilon_{\s}$ trivial if $\s$ \"ubereven) with $e_{\s} > 1$ and at least one of the following:
		\begin{enumerate}
			\item $\s = \Rcal$ and either $\Rcal$ is odd or $\Rcal$ is even and $\epsilon_{\Rcal}$ is the trivial character,
			\item $\s$ has a stable child of size $1$ or $g(\s) = 0$, $\s$ is not \"ubereven and $\s$ has no proper stable odd child, %split into two
			\item $\s$ has no stable proper child, $\lambda_{\s} \in \Z$, $v_K(c_{\s})$ is even and either $g(\s) > 0$ or $\s$ is \"ubereven,
			\item the children of size $1$ of $\s$ are fixed by $\GK$;
		\end{enumerate}
		\item there is a pair of principal clusters $\s' < \s$, both fixed by $\GK$, either with $\s'$ odd and $[-\lambda_{\s} - \delta_{\s'}/2, -\lambda_{\s}] \cap \Z \neq \emptyset,$ or $\s'$ even, the character $\epsilon_{\s'}$ trivial on $\GK$ and $[-d_{\s'}, -d_{\s}]\cap \Z \neq \emptyset$;
		\item there is a twin $\tfrak$ fixed by $\GK$ and either:
		\begin{enumerate}
			\item the character $\epsilon_{\tfrak}$ is trivial and $[-d_{\tfrak}, -d_{P(\tfrak)}]\cap \Z \neq \emptyset$,
			\item the character $\epsilon_{\tfrak}$ is trivial on inertia, $\epsilon_{\tfrak}(\Frob) = -1$, $d_{\tfrak} \in \Z$ and $\nu_{\tfrak} \in 2\Z$ or,
			\item the character $\epsilon_{\tfrak}$ is non-trivial on inertia and $v_K(c_{\tfrak})$ even;
		\end{enumerate} 
	\end{enumerate}
	If $\Rcal$ is not principal, then $\Ccal_k$ has a component of multiplicity $1$ fixed by Frobenius in the cases above, and the additional following cases:
\begin{enumerate}
	\setcounter{enumi}{4}
	\item there is a cotwin $\s < \tfrak$ fixed by $\GK$ and either:
	\begin{enumerate}
		\item the character $\epsilon_{\tfrak}$ is trivial and $[-d_{\s}, -d_{\tfrak}]\cap \Z \neq \emptyset$,
		\item the character $\epsilon_{\tfrak}$ is trivial on inertia, $\epsilon_{\tfrak}(\Frob) = -1$, $d_{\tfrak} \in \Z$ and $\nu_{\tfrak} \in 2\Z$ or,
		\item the character $\epsilon_{\tfrak}$ is non-trivial on inertia and trivial on Frobenius;
	\end{enumerate} 
	\item the top cluster $\Rcal = \s_1 \sqcup \s_2$ is not principal and either:
	\begin{enumerate}
		\item $\s_1$ is odd, fixed by $\GK$ and $[-\lambda_{\Rcal} - \delta_{\s_1}/2, -\lambda_{\Rcal}] \cap \Z \neq \emptyset$,
		\item $\s_1$ is odd, fixed by inertia but not Frobenius and $d_{\Rcal} \in \Z$,
		\item $\s_1$ is odd, inertia swaps $\s_1$ and $\s_2$ and $\epsilon_{\Rcal}(\Frob) = 1$,
		\item $\s_1$ is even, fixed by $\GK$, $\epsilon_{\s_1}$ is trivial and $[-d_{\s_1}, -d_{\Rcal}] \cap \Z \neq \emptyset$,
		\item $\s_1$ is even, fixed by inertia but not Frobenius, $\epsilon_{\s_1}$ is trivial and $d_{\Rcal} \in \Z$, 
		\item $\s_1$ is even, inertia swaps $\s_1$ and $\s_2$, $\epsilon_{\s_1}$ is trivial and $\epsilon_{\Rcal}(\Frob) = 1$.
	\end{enumerate}
	\end{enumerate}
\end{thm}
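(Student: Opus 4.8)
The plan is to read the classification off the explicit combinatorial description of the special fibre $\Ccal_k$. Since $C$ has tame reduction, the components of the minimal snc model and their multiplicities are listed in terms of the cluster picture by \cite[Theorem~7.12,7.18]{faraggi20models}, specialising to \cite[Theorem~8.5]{DDMM18} in the semistable case. These components fall into a few types: the principal components $\Gamma_{\s}$ attached to principal clusters $\s$; the linking chains of $\PP^1$s joining the component of a principal cluster to that of its principal parent; the open ``tails'' produced by odd proper children and children of size $1$; and the loops produced by even children, twins and cotwins. The proof is then to run through each type in turn, record its multiplicity together with the action of $\Frob$ on it, and collect exactly those configurations forcing one component to be simultaneously of multiplicity $1$ and fixed by Frobenius. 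A component can only be Frobenius-fixed if the cluster(s) producing it are fixed by $\GK$, which is why each case carries that hypothesis.

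First I would treat the principal components, covering case (1) and the boundary $e_{\s}=1$ of case (2). The component $\Gamma_{\s}$ has multiplicity $e_{\s}$, so it is itself a candidate precisely when $e_{\s}=1$; the only subtlety is that an übereven $\s$ contributes two components that are interchanged by $\Frob$ exactly when $\epsilon_{\s}$ is non-trivial on $\GK$, whence the requirement that $\epsilon_{\s}$ be trivial. When $e_{\s}>1$ the component $\Gamma_{\s}$ is no longer of multiplicity $1$, so one must look at the chains and tails meeting $\Gamma_{\s}$. The sub-cases (a)--(d) are precisely the local configurations of the children of $\s$ under which the resolution near $\Gamma_{\s}$ introduces a Frobenius-fixed multiplicity-$1$ component: a tail to the point at infinity when $\s=\Rcal$ (odd, or even with $\epsilon_{\Rcal}$ trivial); a tail with a multiplicity-$1$ end coming from a stable child of size $1$ or the genus-$0$, no-proper-stable-odd-child situation; the arithmetic conditions $\lambda_{\s}\in\Z$ and $v_K(c_{\s})$ even governing the relevant tail when there is no stable proper child; and the case where all children of size $1$ are $\GK$-fixed, so that their tails are fixed by Frobenius.

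Next I would analyse the linking chains of case (3). The multiplicities along the chain joining $\Gamma_{\s'}$ to $\Gamma_{\s}$ interpolate between $e_{\s'}$ and $e_{\s}$ according to the continued-fraction expansion of the intervening depths, and a multiplicity-$1$ component occurs on the chain exactly when the sequence of slopes passes through an integer depth, i.e. when $[-\lambda_{\s}-\delta_{\s'}/2,-\lambda_{\s}]\cap\Z\neq\emptyset$ in the odd case or $[-d_{\s'},-d_{\s}]\cap\Z\neq\emptyset$ in the even case. Frobenius fixes the entire chain once both endpoints are fixed, and in the even case the extra hypothesis that $\epsilon_{\s'}$ be trivial ensures the two ends are not swapped. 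This reduces the case to a Euclidean-algorithm computation of the multiplicities, which I would carry out once and reuse.

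Finally come the twins, cotwins and non-principal top cluster of cases (4), (5) and (6), and I expect these to be the main obstacle. Here the relevant components arrange themselves into loops, and the character $\epsilon_{\tfrak}$ governs whether $\Frob$ fixes the loop pointwise, reflects it, or interchanges its two arcs; the difficulty is to match the arithmetic of the tame quotient singularity (inertia acting through a cyclic group) against this residual Frobenius action. When $\epsilon_{\tfrak}$ is trivial the loop is fixed and the criterion is again that a depth interval such as $[-d_{\tfrak},-d_{P(\tfrak)}]$ meets $\Z$ (sub-case (a)). When $\epsilon_{\tfrak}$ is trivial on inertia but $\epsilon_{\tfrak}(\Frob)=-1$ the loop is reflected, so a fixed multiplicity-$1$ component survives only at the fixed point of the reflection, which forces $d_{\tfrak}\in\Z$ together with the parity condition $\nu_{\tfrak}\in 2\Z$ on the loop length (sub-case (b)); and when $\epsilon_{\tfrak}$ is non-trivial on inertia the singularity type changes and the surviving component is detected by the parity of $v_K(c_{\tfrak})$ (sub-case (c)). The cotwin case (5) follows by the same analysis with the depths adjusted, and for the non-principal top cluster case (6) the new feature is that inertia may swap the halves $\s_1,\s_2$, which I would handle by the three-way split into fixed by $\GK$, fixed by inertia but not Frobenius, and swapped by inertia. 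The crux throughout is verifying that the ``integer in an interval'' criterion and the parity conditions genuinely detect a surviving multiplicity-$1$ Frobenius-fixed component in each of these loop configurations.
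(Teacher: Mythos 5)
Your proposal follows essentially the same route as the paper: both decompose the special fibre of the minimal snc model into principal components, tails, linking chains, loops and crossed tails via the explicit descriptions in \cite[Theorems~7.12, 7.18, 7.21]{faraggi20models} (and \cite[Theorem~8.5]{DDMM18} in the semistable case), then check each type for a multiplicity-$1$ component fixed by Frobenius, with the characters $\epsilon_{\s}$ controlling whether Frobenius swaps paired components or inverts loops. The case division and the key criteria (the $e_{\s}=1$ condition, the integer-in-an-interval tests, the odd-loop-length observation for $\epsilon_{\tfrak}(\Frob)=-1$, and the parity of $v_K(c_{\tfrak})$ for crossed tails) all match the paper's argument.
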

\begin{proof}
	For brevity, call components of multiplicity $1$ which are fixed by Frobenius \textit{good}. If $\Ccal$ has a good component, it must either by a principal component of part of a chain of rational curves. We investigate these cases separately, beginning with principal components. The principal components are parametrised by principal clusters\footnote{Except in crossed tails but these always have even multiplicity.}, and by \cite[Theorem~7.12]{faraggi20models} a good component must come from a principal cluster $\s$ fixed by $\GK$. In addition, $\s$ must have $e_{\s} = 1$, i.e. $d_{\s} \in \Z$ and $\nu_{\s} \in 2\Z$, and if $\s$ is \"ubereven, the character $\epsilon_{\s}$ must be trivial. In all these cases we do indeed have a good component. This is case (i).
	
	We are left to find the cases where there is a good component in a chain of rational curves. Chains of rational curves appear in several flavours: linking chains between principal components, loops from a principal component to itself, and tails (including the crosses of crossed tails). A principal cluster $\s$ fixed by Galois with $e_{\s} > 1$ contributes tails to the special fibre. If $\s = \Rcal$ then the $\infty$-tails will have a good component if $\Rcal$ is odd or if $\Rcal$ is even and the character $\epsilon_{\Rcal}$ is trivial by the first three rows of the first table in \cite[Theorem~7.18]{faraggi20models}, noting that $\epsilon_{\Rcal}(\Frob)$ swaps the $\infty$-tails if and only if it is $-1$ by Theorem \cite[Theorem~7.21]{faraggi20models}. If $\s$ is a general principal cluster then a $(0,0)$-tail exists if (ii)(b) is satisfied and it will always have a good component An $(x_{\s}=0)$-tail will have a good component so long as $\lambda_{\s} \in \Z$ and $v_K(c_{\s})$ is even --- the former ensures that there are two $(x_{\s}=0)$-tails by Theorem \cite[Theorem~7.18]{faraggi20models} and the latter that they are fixed by Frobenius, by  \cite[Theorem~7.21]{faraggi20models}. A $(y_{\s}=0)$-tail will have a good component if and only if the singletons of $\s$ are fixed by Galois. This is case (ii).
	
	Suppose $L$ is a linking chain arising from a pair of orbits $X' < X$. By \cite[Theorem~7.18]{faraggi20models}, the lowest common multiple of multiplicities of the components of $L$ is $|X'|$, so $X = \s$, $X' = \s'$ must be clusters fixed by Galois. If $\s'$ is odd then $L$ has a good component if and only if $[-\lambda_{\s}-  \delta_{\s'}/2, -\lambda_{\s}] \cap \Z \neq \emptyset$. If $\s'$ is even then $L$ has a good component if and only if $\epsilon_{\s'}$ is trivial and $[-d_{\s'}, -d_{\s}]\cap \Z \neq \emptyset$. This is because $\sigma \in \GK$ swaps the two linking chains connecting $\Gamma_{\s,L}$ and $\Gamma_{\s',L}$ if and only if $\epsilon_{\s'}(\sigma) = -1$ This is case (iii). 
	
	Now suppose $L$ is a loop arising from an orbit of twins. By the same argument as above, this must in fact arise from a twin $\tfrak$ fixed by Galois with $\epsilon_{\tfrak}$ trivial on inertia. If $\epsilon_{\tfrak}$ is further trivial on Frobenius then $L$ will have a good component if and only if it has a component of multiplicity $1$, which occurs precisely when $[-d_{\tfrak}, -d_{P(\tfrak)}]\cap \Z \neq \emptyset$. If $\epsilon_{\tfrak}(\Frob) = -1$ then Frobenius inverts the loop, but there can still be a component fixed by Frobenius if $L$ is a loop of odd length. This happens precisely when $d_{\tfrak}$ is an integer.  
	
	If $T$ is instead a crossed tail, it must arise from a twin $\tfrak$ fixed by Galois with $\epsilon_{\tfrak}$ non-trivial on inertia. In this case, the crosses of $T$ have multiplicity $1$, and by   they are fixed by Frobenius if and only if $v_K(c_{\tfrak})$ is even. This and the previous paragraph is case (iv).
	
	Cases (v) and (vi), where $\Rcal$ is a non-principal cluster, are dealt with similarly.
	
\end{proof}

\begin{cor}
	Let $C$ be a hyperelliptic curve with tame reduction over a local field $K$ with residue field $k$ of characteristic $p>2$. Suppose that $|k|=q$ is such that $q > 2(g(C)^2 - 1)$. Then $C$ has a $K$-rational point in precisely the cases described in Theorem \ref{thm:multonefrob}.
\end{cor}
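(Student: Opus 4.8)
The plan is to show that, under the size hypothesis on $q$, the three results already established combine into the stated equivalence, so that the purely combinatorial criterion of Theorem~\ref{thm:multonefrob} becomes a criterion for local solubility. First I would invoke Proposition~\ref{prop:ratpoints} applied to $X = C$ and $\X = \Ccal$: since $\Ccal$ is a regular snc model, $C$ has a $K$-rational point if and only if $\Ccal_k$ has a component of multiplicity $1$ which is fixed by Frobenius \emph{and} carries a smooth $k$-rational point. Theorem~\ref{thm:multonefrob}, on the other hand, characterises exactly when $\Ccal_k$ possesses a component of multiplicity $1$ fixed by Frobenius. Thus the only gap between the two statements is the extra requirement that such a component actually have a smooth $k$-point, and the entire content of the corollary is that the hypothesis $q > 2(g(C)^2 - 1)$ closes this gap.

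To close it, I would argue that every multiplicity-$1$ Frobenius-fixed component automatically acquires a smooth $k$-point once $q$ is large enough. The key observation is that in an snc model each irreducible component of the special fibre is a \emph{smooth} curve, so any such \textit{good} component, in the terminology of the proof of Theorem~\ref{thm:multonefrob}, is a smooth component fixed by Frobenius. Lemma~\ref{lem:hassweil}, whose numerical hypothesis is precisely $q > 2(g(C)^2 - 1)$, then guarantees that it has a smooth $k$-rational point. Hence, under our hypothesis, the existence of a component of multiplicity $1$ fixed by Frobenius is equivalent to the existence of one that in addition has a smooth $k$-rational point, the reverse implication being trivial.

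Chaining these equivalences then yields the result: $C(K) \neq \emptyset$ if and only if $\Ccal_k$ has a good component with a smooth $k$-point by Proposition~\ref{prop:ratpoints}, if and only if $\Ccal_k$ has a good component at all by Lemma~\ref{lem:hassweil} and the bound on $q$, if and only if one of the cases enumerated in Theorem~\ref{thm:multonefrob} holds.

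The step requiring the most care, and the only place the numerical hypothesis is genuinely used, is the application of Lemma~\ref{lem:hassweil}. Here I would be careful to note that the lemma must be applied uniformly across all good components: both the positive-genus principal components, where the Hasse--Weil estimate together with $g(\Gamma) \leq g(C)$ is needed, and the genus-$0$ components appearing in tails, loops and linking chains, where the existence of a smooth $k$-point is immediate, since a $\PP^1_k$ has $q+1$ rational points and only a handful of intersection points to avoid. Because the inequality $q > 2(g(C)^2 - 1)$ is independent of which component one inspects, the same bound suffices everywhere, and no case-by-case tracking through the list of Theorem~\ref{thm:multonefrob} is required.
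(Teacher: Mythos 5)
Your proposal is correct and follows essentially the same route as the paper: combine Proposition~\ref{prop:ratpoints}, Theorem~\ref{thm:multonefrob} and Lemma~\ref{lem:hassweil}, with the hypothesis $q > 2(g(C)^2-1)$ serving only to guarantee via the lemma that every multiplicity-one Frobenius-fixed (hence smooth) component carries a smooth $k$-point. If anything, your write-up is slightly more careful than the paper's, which only spells out one direction of the ``precisely'' equivalence.
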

\begin{proof}
	By Lemma \ref{lem:hassweil}, any smooth components of the special fibre fixed by Frobenius have a $k$-point. By Theorem \ref{thm:multonefrob}, a smooth component fixed by Frobenius exists and hence by Proposition \ref{prop:ratpoints} $C$ has a $K$-rational point.
\end{proof}

\section{Examples} We give some examples to illustrate our theorem. All curves are over $\Q_p$, with $p$ sufficiently large so that all multiplicity $1$ components of the special fibre which are fixed by Frobenius have points.

\begin{eg}
	Let $C:y^2 = (x^4 - p^{17})(x^3 - p^2).$ We immediately observe that $\Rcal$ is principal, odd and $e_{\Rcal} > 1$, so $C$ must have a $K$-rational point by condition (ii)(a).
\end{eg}

\begin{eg}
	Let $C:y^2 = p((x-1)^2 + p^2)((x-\zeta_3)^2 +p^2)((x-\zeta_3^2)^2 + p^2),$ with $p \equiv -1 \, \mathrm{mod} \, 3$ and $\zeta_3$ a fixed cube root of unity. The cluster picture of $C$ is shown in Figure \ref{fig:eg1}. There is a unique principal cluster $\Rcal$, which is \"ubereven. However, $e_{\Rcal}=2$ and $\epsilon_{\Rcal}(\sigma) = (-1)^v_K(c_f) = -1$ for $\sigma$ a generator of $\Gal(\Q_p(\sqrt{2})/\Q_p)$ and so $\epsilon_{\Rcal}$ is non-trivial. Therefore condition (i) and (ii) are not satisfied. Since there is only $1$ principal cluster (iii) is not satisfied. Therefore we are left to check (iv). But the three twins are permuted by Frobenius and hence (iv) is not satisfied. Therefore $C$ has no $K$-rational points. This can also be seen from looking at the minimal snc model --- the components of multiplicity $1$ are the crosses of the three crossed tails. But the crossed tails are permuted by Frobenius as the corresponding clusters are, so there is no component of multiplicity $1$ fixed by Frobenius.
	\begin{figure}[ht]
		\centering
		\clusterpicture
		\Root[] {} {first} {r1};
		\Root[] {} {r1} {r2};
		\ClusterLDName c1[][1][\tfrak_1] = (r1)(r2);
		\Root[] {} {c1} {r3};
		\Root[] {} {r3} {r4};
		\ClusterLDName c2[][1][\tfrak_2] = (r3)(r4);
		\Root[] {} {c2} {r5};
		\Root[] {} {r5} {r6};
		\ClusterLDName c3[][1][\tfrak_3] = (r5)(r6);
		\ClusterLDName c4[][0][\Rcal] = (c1)(c2)(c3);
		\endclusterpicture \quad \raisebox{-2.5em}{\begin{tikzpicture}
			\tikzstyle{every node}=[font=\tiny]
			\draw[very thick] (0, 0) -- ++(0, 2) node[left] {2};
			\draw (-0.2, 1.5) -- ++ (1, 0.5) node[above] {2}; 
			\draw (0.4, 1.9) -- ++ (1.2, 0) node[right] {2}; 
			\draw (1.2, 2.1) -- ++ (0, -0.4);
			\draw (1.4, 2.1) -- ++ (0, -0.4);
			\draw (-0.2, 0.9) -- ++ (1, 0.5) node[above] {2}; 
			\draw (0.4, 1.3) -- ++ (1.2, 0) node[right] {2}; 
			\draw (1.2, 1.5) -- ++ (0, -0.4);
			\draw (1.4, 1.5) -- ++ (0, -0.4);
			\draw (-0.2, 0.3) -- ++ (1, 0.5) node[above] {2}; 
			\draw (0.4, 0.7) -- ++ (1.2, 0) node[right] {2}; 
			\draw (1.2, 0.9) -- ++ (0, -0.4);
			\draw (1.4, 0.9) -- ++ (0, -0.4);
			\end{tikzpicture}}
		\caption{Cluster picture and minimal snc model of $C:y^2 = p((x-1)^2 + p^2)((x-\zeta_3)^2 +p^2)((x-\zeta_3^2)^2 + p^2).$}
		\label{fig:eg1}
	\end{figure}
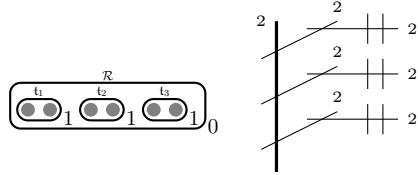
\end{eg}

\begin{eg}
	Let $C:y^2 = p(x^3 - p^2)((x-1)^3 - p^2).$ This is a hyperelliptic curve of Namikawa-Ueno type $\mathrm{II}^*-\mathrm{II}^*-\alpha$. The cluster picture of $C$ is shown in Figure \ref{fig:eg2}. The two principal clusters $\s_1$ and $\s_2$ have $e_{\s_i} = 6$ and so (i) does not apply. Quick inspection reveals that (ii)-(v) don't either. Continuing, we see that the top cluster $\Rcal = \s_1 \sqcup \s_2$ is not principal and $\s_1$ is odd and fixed by Galois. However, $[-\lambda_{\Rcal} - \delta_{\s_1}/2, -\lambda_{\Rcal}] = [-11/6, -3/2]$ which does not contain an integer. Therefore condition (vi) does not give us a $K$-rational point, and therefore $C$ cannot have any $K$-rational points. This can also be seen from the minimal snc model as there is no component of multiplicity $1$.
	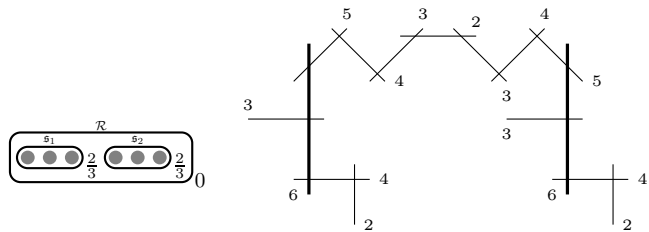
\begin{figure}[ht]
		\centering
		\clusterpicture
		\Root[] {} {first} {r1};
		\Root[] {} {r1} {r2};
		\Root[] {} {r2} {r3};
		\ClusterLDName c1[][\frac23][\s_1] = (r1)(r2)(r3);
		\Root[] {} {c1} {r4};
		\Root[] {} {r4} {r5};
		\Root[] {} {r5} {r6};
		\ClusterLDName c2[][\frac23][\s_2] = (r4)(r5)(r6);
		\ClusterLDName c2[][0][\Rcal] = (c1)(c2);
		\endclusterpicture \quad
	\raisebox{-3em}{\begin{tikzpicture}
	\tikzstyle{every node}=[font=\tiny]
	\draw[very thick] (-1.2,-0.7) -- ++(0,-2) node[left] {6};
	\draw[very thick] (2.2, -0.7) -- ++(0, -2) node[left] {6};
	
	\draw (-1, -1.7) -- ++(-1, 0) node[above] {3};
	\draw (-1.4, -2.5) -- ++(1, 0) node[right] {4};
	\draw (-0.6, -2.3) --++(0, -0.8) node[right] {2};
	
	\draw (2.4, -1.7) -- ++(-1, 0) node[below] {3};
	\draw (2, -2.5) -- ++(1, 0) node[right] {4};
	\draw (2.8, -2.3) --++(0, -0.8) node[right] {2};

	\draw (-1.4, -1.2) -- ++(0.7, 0.7) node[above] {5};
	\draw (-0.9, -0.5) -- ++(0.7, -0.7) node[right] {4};
	\draw (-0.4, -1.2) -- ++(0.7, 0.7) node[above] {3};
	\draw (0, -0.6) -- ++(1, 0) node[above] {2};
	\draw (0.7, -0.5) -- ++(0.7, -0.7) node[below] {3};
	\draw (1.2, -1.2) -- ++(0.7, 0.7) node[above] {4};
	\draw (1.7, -0.5) -- ++(0.7, -0.7) node[right] {5};
	
	\end{tikzpicture}}
	
	\caption{Cluster picture and minimal snc model of $C:y^2 = p(x^3 - p^2)((x-1)^3 - p^2).$}
	
	\label{fig:eg2}
\end{figure}
\end{eg}

\appendix
\section{Cluster Pictures}
\label{app:clusters}

We briefly collate some definitions concerning clusters for the convenience of the reader.

\begin{definition}
	\label{def:cluster}
	Let $C : y^2 = f(x)$ be a hyperelliptic curve over $K$, with $\Rcal$ the set of roots of $f$. A \textit{cluster} is a non-empty subset $\s \subseteq \mathcal{R}$ of the form $\s = D \cap \Rcal$ for some disc $D = z + \pi^n \mathcal{O}_K$, where $z \in \overline{K}$ and $n \in \Q$. If $\s$ is a cluster and $|\s| > 1$, $\s$ is a \textit{proper} cluster and we define its \textit{depth} $$d_{\s} = \min_{r, r' \in \s} v_K(r-r').$$
	The \textit{cluster picture} $\Sigma$ is the set of all clusters of the roots of $f$.
\end{definition}

\begin{definition}
	A cluster $\s\in \Sigma_{\chi}$  is \textit{odd} (resp. \textit{even}) if $|\s|$ is odd (resp. even). A \textit{child} $\s' < \s$ is a maximal subcluster of $\s$, and its \textit{relative depth} is $\delta_{\s'} = d_{\s'} - d_{\s}$. The cluster $\s$ is \textit{\"ubereven} if all its children are even. It is a \textit{twin} if $|\s|=2$ and \textit{principal} if $|\s| \geq 3$, except if either $\s=\mathcal{R}$ is even and has exactly two children, or if $\s$ has a child of size $2g$. It is a \textit{cotwin} if it has a child of size $2g$ whose complement is not a twin.
\end{definition}

\begin{definition}
	Let $D$ be a $p$-adic disk. Then $$\nu_D(f) = v_K(c_f) + \sum_{r \in \Rcal} \min(d_D, v_K(z_D - r)),$$ and for a cluster $\s$, $\nu_{\s} = \nu_{D(\s)}$.
\end{definition}

\begin{definition}
	Let $\s$ be a proper cluster. Then $e_{\s} \in \Z$ is the smallest integer such that $e_{\s}d_{\s} \in \Z$ and $e_{\s}\nu_{\s} \in 2\Z$.
\end{definition}

\begin{definition}
	Let $\s \in \Sigma$ be a proper cluster. Then the \textit{genus} of $\s$, $g(\s)$, is such that $2g(\s) + 1$ or $2g(\s) + 2$ equals the number of odd children of $\s$.
\end{definition}

\begin{definition}[Invariants and Characters]
	\label{def:clusterfunctions2}
	For $\sigma\in\GK$ let
	\begin{align*}
		\chi(\sigma) = \frac{\sigma(\pi)}{\pi} \mod \mathfrak{m}.
	\end{align*}
	For a proper cluster $\s\in\Sigma_{C}$ define
	\begin{align*}
		\lambda_{\s} &= \frac{\nu_{\s}}{2} - d_{\s} \sum_{\s'<\s} \bigg\lfloor \frac{|\s'|}{2} \bigg\rfloor.
	\end{align*}
	Define $\theta_\s=\sqrt{c_f\prod_{r\notin \s}(z_\s-r)}.$ If $\s$ is either even or a cotwin, we define $\epsilon_\s: \GK\to\{\pm1\}$ by $$\epsilon_\s(\sigma)\equiv\frac{\sigma(\theta_{\s^*})}{\theta_{(\sigma\s)^*}} \mod \mathfrak{m}.$$
	For all other clusters $\s$ set $\epsilon_\s(\sigma)=0$.
\end{definition}

\begin{definition}
	Let $\s'<\s$ be clusters in $\Sigma$. Then $\s'$ is a \emph{stable child} of $\s$ if the stabiliser of $\s$ also stabilises $\s'$.
\end{definition}

\bibliographystyle{plain}
\bibliography{bibli}

\end{document}